\tikzset{->-/.style={decoration={markings,mark=at position #1 with {\arrow{>}}},postaction={decorate}}}
\newtheorem{thm}{Theorem}[section]
\newtheorem{cor}[thm]{Corollary}
\newtheorem*{thmA}{Theorem A}
\newtheorem*{thmB}{Theorem B}
\newcommand{\N}{\mathbb{N}}
\newcommand{\Z}{\mathbb{Z}}
\newcommand{\tM}{\mathbf{M}}
\newcommand{\tN}{\mathbf{N}}
\newcommand{\tS}{\mathbf{S}}
\newcommand{\tT}{\mathbf{T}}
\DeclareMathOperator{\height}{ht}
\numberwithin{equation}{section}
\def\pn{\par\noindent}
\begin{document}

\title[Conciseness on normal subgroups and new concise words from outer commutator words]{Conciseness on normal subgroups and new concise words from outer commutator words}

\author{Gustavo A.\ Fern\'andez-Alcober}
\address{{Department of Mathematics}, {University of the Basque Country UPV/EHU,} {Bilbao, Spain}}
\curraddr{}
\email{gustavo.fernandez@ehu.eus}
\thanks{}

\author{Matteo Pintonello}
\address{{Department of Mathematics}, {University of the Basque Country UPV/EHU,} {Bilbao, Spain}}
\curraddr{}
\email{matteo.pintonello@ehu.eus}
\thanks{}

\subjclass[2010]{Primary 20F10; Secondary: 20F14.}

\keywords{Word values, verbal subgroup, concise word.}

\date{}

\dedicatory{}

\begin{abstract}
Let $w=w(x_1,\ldots,x_r)$ be an outer commutator word.
We show that the word $w(u_1,\ldots,u_r)$ is concise whenever $u_1,\ldots,u_r$ are non-commutator words in disjoint sets of variables.
This applies in particular to words of the form $w(x_1^{n_1},\ldots,x_r^{n_r})$, where the $n_i$ are non-zero integers.
Our approach is via the study of values of $w$ on normal subgroups, and in this setting we obtain the following result: if $N_1,\ldots,N_r$ are normal subgroups of a group $G$ and the set of all
values $w(g_1,\ldots,g_r)$ with $g_i\in N_i$ is finite then also the subgroup generated by these values, i.e.\ $w(N_1,\ldots,N_r)$, is finite.
\end{abstract}

\maketitle

\section{\bf Introduction}

\vskip 0.4 true cm

The goal of this article is to extend the main results of the paper \cite{Fe-Pi} from the case of lower central and derived words to the more general setting of outer commutator words.
More precisely, as was conjectured in \cite{Fe-Pi}, we have the following two results.

\begin{thmA}
Let $w=w(x_1,\ldots,x_r)$ be an outer commutator word.
If $u_1,\ldots,u_r$ are non-commutator words in disjoint sets of variables, then the word $w(u_1,\ldots,u_r)$ is concise.
In particular, the word $w(x_1^{n_1},\ldots,x_r^{n_r})$ is concise whenever $n_1,\ldots,n_r\in\Z\smallsetminus\{0\}$.
\end{thmA}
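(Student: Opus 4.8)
The plan is to derive Theorem A from the normal-subgroup result announced in the abstract, which I will call Theorem B, using the arithmetic built into non-commutator words to feed information into a finite value set. Throughout, write $v=w(u_1,\dots,u_r)$, assume that $v$ takes only finitely many values in a group $G$, and let $V$ denote this finite, conjugation-closed set; the goal is to show that $v(G)$ is finite. First I would make the standard reduction to the case in which $G$ is finitely generated: if $v(G)$ were infinite, then, since $V$ is finite, all of its elements already lie in a finitely generated subgroup $H$ with $v(H)=v(G)$, so it suffices to treat finitely generated $G$. Next I would record the identity $v(G)=w(N_1,\dots,N_r)$, where $N_i:=u_i(G)$ is the (fully invariant, hence normal) verbal subgroup of $u_i$. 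The inclusion $v(G)\subseteq w(N_1,\dots,N_r)$ is immediate, while the reverse inclusion comes from the multilinear expansion of the outer commutator word $w$, which rewrites $w$ evaluated on products of $u_i$-values as a product of conjugates of $v$-values.

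The essential input from the hypothesis that each $u_i$ is a non-commutator word is elementary: if $e_i\neq 0$ is the exponent sum of some variable of $u_i$, then substituting $1$ for the remaining variables shows that $g^{e_i}$ is a $u_i$-value for every $g\in G$. Hence $w(g_1^{m_1e_1},\dots,g_r^{m_re_r})$ lies in $V$ for all $g_j\in G$ and all integers $m_j$. This is the device that injects genuine exponent-scaling relations into the finite set $V$; the special case $u_i=x_i^{n_i}$ (with $e_i=n_i$) will then fall out immediately.

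The next step is to strip off the commutator part by a Schur-type argument. Because $V$ is finite and closed under conjugation, $G$ acts on it by permutations, so $C_G(v(G))=\bigcap_{t\in V}C_G(t)$ has finite index in $G$; thus $v(G)$ is centre-by-finite and Schur's theorem gives that $v(G)'$ is finite. Passing to $G/v(G)'$, I may assume $v(G)$ is abelian, and the problem reduces to showing that the finitely generated abelian group $v(G)$ is finite, i.e.\ that every $v$-value is of finite order. Here the scaling relations bite: modulo the action of $G$ one has $w(c_1^{m},c_2,\dots,c_r)\equiv w(c_1,\dots,c_r)^{m}$ for $u_i$-values $c_i$, and since the left-hand side always lies in the finite set $V$, the $v$-values become torsion on the coinvariants $v(G)/[v(G),G]$, which is therefore finite.

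The hard part will be the interaction between these exponent relations and the \emph{non-central} action of $G$ on $v(G)$: the conjugation corrections in the expansion live in $[v(G),G]$, which at this stage is only known to be finitely generated, so the scaling argument by itself controls only $v(G)/[v(G),G]$. This is exactly where Theorem B is the indispensable new tool. Writing $[v(G),G]=[w(N_1,\dots,N_r),G]$ and viewing it through the extended outer commutator word $[w(x_1,\dots,x_r),x_{r+1}]$ on the normal subgroups $N_1,\dots,N_r,G$, one wants to invoke Theorem B to conclude that this commutator subgroup is finite, and then feed the result back to upgrade coinvariant-torsion into full finiteness of $v(G)$. The genuine obstacle — and the crux of the whole argument — is to verify that, after the above reductions, the hypothesis of Theorem B (finiteness of the relevant set of $w$-values on the $N_i$) actually holds rather than being merely equivalent to the desired conclusion; closing this loop is where the outer-commutator structure of $w$ and the exponent-sum structure of the $u_i$ must be deployed together.
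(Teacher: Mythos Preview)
Your plan does not close, and you have correctly located the failure point yourself. Theorem~B requires that the set $w\{N_1,\ldots,N_r\}$ of \emph{all} values $w(g_1,\ldots,g_r)$ with $g_i\in N_i=u_i(G)$ be finite, whereas the hypothesis of Theorem~A only gives finiteness of $w\{S_1,\ldots,S_r\}$ with $S_i=u_i\{G\}$, the set of $u_i$-values. Your Schur reduction and coinvariant-torsion argument are fine and do show that $v(G)/[v(G),G]$ is finite, but the attempt to finish by applying Theorem~B to the outer commutator $[w,x_{r+1}]$ on $(N_1,\ldots,N_r,G)$ runs into exactly the same obstruction: you would again need finiteness of values over the full $N_i$, not just over the $S_i$. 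Nothing in your reductions converts the finiteness of $w\{\mathbf S\}$ into finiteness of $w\{\mathbf N\}$; indeed, an abelian group with a finite $G$-invariant generating set and finite coinvariants can still be infinite, so the module-theoretic picture alone will not rescue the argument.

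The paper does not deduce Theorem~A from Theorem~B at all. Instead it proves a common refinement (Theorem~1.2 in the paper) whose hypothesis is precisely that $w\{\mathbf S\}$ is finite, where each $S_i$ is a normal generating set of $N_i$ containing all $n_i$th powers of $N_i$. For Theorem~A one takes $S_i=u_i\{G\}$, so that $w\{\mathbf S\}=v\{G\}$ is exactly the assumed finite set, and condition~(ii) is supplied by the non-commutator property $g^{e_i}\in S_i$. The engine behind Theorem~1.2 is a linear series for $w(\mathbf N)$ (Theorem~1.1) built inductively on the height of $w$; linearity in one component at each step is what lets the ``powers land back in $S_i$'' trick force bounded torsion on every factor, not just on the top coinvariant quotient. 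In short, the missing idea in your approach is that one must work with the generating sets $S_i$ throughout the inductive/series argument, rather than trying to pass to the full $N_i$ and invoke Theorem~B as a black box.
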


\begin{thmB}
Let $w=w(x_1,\ldots,x_r)$ be an outer commutator word.
Assume that $\tN=(N_1,\ldots,N_r)$ is a tuple of normal subgroups of a group $G$ such that
$w\{\tN\}$ is finite.
Then the subgroup $w(\tN)$ is also finite.
\end{thmB}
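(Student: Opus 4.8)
The plan is to reduce Theorem~B to the commutator case $w=[x_1,x_2]$, which is a lower central word and hence already covered by \cite{Fe-Pi}, by inducting on the number of variables of $w$. Write $S=w\{\tN\}$ and $D=w(\tN)=\langle S\rangle$, and assume without loss that $G=\langle N_1,\dots,N_r\rangle$. Since $w(g_1,\dots,g_r)^g=w(g_1^g,\dots,g_r^g)$ and each $N_i$ is normal, the finite set $S$ is closed under conjugation by $G$; the resulting action $G\to\mathrm{Sym}(S)$ has kernel contained in $C_G(D)$, so $C_G(D)$ has finite index in $G$. Thus $D$ is finitely generated, every element of $D$ has a $G$-conjugacy class of size at most $|G:C_G(D)|$, and $D/Z(D)$ is finite; by Schur's theorem $D'$ is finite, so $D$ is finite if and only if it is torsion. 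This finiteness-of-conjugacy-classes property is the tool I would use throughout.

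For the inductive step I would write $w=[w_1,w_2]$, with $w_1,w_2$ outer commutator words in the disjoint tuples $\tN_1=(N_1,\dots,N_s)$ and $\tN_2=(N_{s+1},\dots,N_r)$, and set $M=w_1(\tN_1)$, $K=w_2(\tN_2)$, both normal in $G$. Because the value sets $w_1\{\tN_1\}$ and $w_2\{\tN_2\}$ are conjugation-closed generating sets of $M$ and $K$, one has $D=[M,K]$. My aim is to apply the commutator case of Theorem~B to the pair $M,K$; for this I must show that the full commutator set $\{[m,k]:m\in M,\ k\in K\}$ is finite, after which $[M,K]=D$ is finite and the induction closes.

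The bridge is the pair of conditions
\begin{equation}\label{plan:index}
|M:C_M(K)|<\infty \qquad\text{and}\qquad |K:C_K(M)|<\infty.
\end{equation}
Granting \eqref{plan:index}, I would pick transversals of $C_M(K)$ in $M$ and of $C_K(M)$ in $K$ and use the identities $[ab,c]=[a,c]^b[b,c]$ and $[a,bc]=[a,c][a,b]^c$ to write every $[m,k]$ as a $G$-conjugate of one of the finitely many commutators of transversal elements; since these lie in $D$ and therefore have finite $G$-conjugacy classes, the set $\{[m,k]\}$ is a finite union of finite conjugacy classes. To obtain the first inequality in \eqref{plan:index} it suffices to show that the image of $w_1\{\tN_1\}$ in $\overline G:=G/C_G(K)$ is finite, for then the inductive hypothesis applied to the shorter word $w_1$ and the normal subgroups $\overline{N_1},\dots,\overline{N_s}$ of $\overline G$ gives $w_1(\overline{\tN_1})=\overline M$ finite, i.e.\ $|M:C_M(K)|<\infty$; the second inequality is symmetric.

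The hard part, which I expect to be the main obstacle, is exactly this finiteness of $w_1\{\tN_1\}$ modulo $C_G(K)$. Two values $m_0,m_0'\in w_1\{\tN_1\}$ lie in one coset of $C_G(K)$ precisely when they induce the same map $k_0\mapsto[m_0,k_0]$ from $w_2\{\tN_2\}$ into $S$, so everything comes down to bounding the number of such commutator maps. When $w_2$ is a single variable, $w_2\{\tN_2\}$ is the whole normal subgroup $N_r$ and is closed under powers; then $[m_0,k_0^{\,i}]\in S$ for every $i$, the orbit of $m_0$ under $\langle k_0\rangle$ is finite, and the bound---indeed the torsion of the values---follows at once, which is why the lower central case is comparatively easy. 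For a composite $w_2$ this closure under powers is lost and the orbit argument fails; here I would instead work modulo $C_G(K)$ and feed in the inductive hypotheses for both $w_1$ and $w_2$, paralleling the treatment of derived words in \cite{Fe-Pi}. Once \eqref{plan:index} is in hand, the reduction above together with the commutator case completes the proof.
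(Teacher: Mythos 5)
Your reduction machinery is fine as far as it goes: the FC-type observations in your first paragraph, the identity $D=[M,K]$ for normal generating sets, and the transversal computation showing that the two centralizer conditions $|M:C_M(K)|<\infty$ and $|K:C_K(M)|<\infty$ force the set $\{[m,k]:m\in M,\ k\in K\}$ to be finite are all correct. But the proof has a genuine gap exactly where you flag it, and that gap is not a routine detail --- it is where the entire difficulty of Theorem B lives. Your route to the centralizer conditions is to apply the inductive hypothesis to $w_1$ over $\overline G=G/C_G(K)$, which requires knowing beforehand that the image of $w_1\{\tN_1\}$ in $\overline G$ is finite. No argument for this is given beyond the suggestion to ``feed in the inductive hypotheses,'' which is circular: the inductive hypothesis for $w_1$ can only be invoked once one knows the $w_1$-values are finite modulo $C_G(K)$, and that is precisely the unproved claim. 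Your counting remark does not repair this, because the maps $k_0\mapsto[m_0,k_0]$ are maps from $w_2\{\tN_2\}$ (possibly infinite) into $S$, and there may be infinitely many such maps.

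Worse, the implication you need is not a formal consequence of the finiteness of $w\{\tN\}$. Let $K=\bigoplus_{i\ge 1}\langle a_i\rangle\oplus\langle z\rangle$ be elementary abelian of exponent $2$, let $A=\bigoplus_{i\ge 1}\langle b_i\rangle$ act on $K$ by $a_j^{b_i}=a_jz^{\delta_{ij}}$ and $z^{b_i}=z$, and set $G=K\rtimes A$, $N_1=A\langle z\rangle$, $N_2=K$. Both $N_1,N_2$ are normal in $G$, $z$ is central, and every commutator $[g_1,g_2]$ with $g_i\in N_i$ lies in $\langle z\rangle$, so for $w=[x_1,x_2]$ the set $w\{\tN\}$ has at most two elements; yet $C_{N_1}(N_2)=\langle z\rangle$ has infinite index in $N_1$, i.e.\ the image of the value set of $w_1=x_1$ in $G/C_G(N_2)$ is infinite. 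So the pattern ``finite value set $\Rightarrow$ finite index of the centralizer'' is false in general, and nothing in your proposed derivation of the bridge uses the compositeness of $w_1$ or $w_2$; if the bridge holds at all in the inductive step, its proof must exploit the outer-commutator structure in an essential way, and that work is exactly what is missing. This is also where your plan diverges from the paper, which never goes through centralizers: it builds a normal series from $[w(\tN),w(\tN)]$ up to $w(\tN)$ whose sections are images of extended words, each linear in one component of an outer commutator extension of $\tN$ (Theorem \ref{linear series ocw}), bounds each section using that linearity (Theorem \ref{ocw concise most general}), and obtains Theorem B as Corollary \ref{ocw concise on normal}.
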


Here we keep the notation and terminology introduced in
\cite{Fe-Pi}.
Recall in particular that outer commutator words are defined recursively, starting from single variables, in such a way that if $\alpha$ and $\beta$ are outer commutators in
\emph{different variables}, then $w=[\alpha,\beta]$ is also an outer commutator.
Also, non-commutator words are the words lying outside the derived subgroup of the free group of infinite rank, i.e.\ whose exponent sum in at least one of the variables is not zero.

Let $w$ be a word.
If we write $w=w(x_1,\ldots,x_r)$, we tacitly assume that $w$ uses \emph{all} variables $x_1,\ldots,x_r$.
Given a tuple $\tS=(S_1,\ldots,S_r)$ of subsets of a group $G$, we let
$w\{\tS\}$ be the set of all values $w(g_1,\ldots,g_r)$ with
$g_i\in S_i$ for every $i=1,\ldots,r$, and $w(\tS)$ is the subgroup generated by $w\{\tS\}$.
If all subsets $S_i$ are equal to $G$, we simply write $w\{G\}$ and $w(G)$, respectively.
The word $w$ is \emph{concise} if whenever the set $w\{G\}$ is finite in a group $G$, then also the subgroup $w(G)$ is finite.

For the proofs of Theorems A and B, we rely on the same ideas that we used for the corresponding results in \cite{Fe-Pi}.
The key is the following theorem on the existence of a linear series for subgroups of the form $w(\tN)$, where $w$ is an outer commutator word.
Following \cite{Fe-Mo}, we define the \emph{height} $\height(w)$ of an outer commutator word $w$ inductively, with a single variable having height 0, and with the height of $w=[\alpha,\beta]$ being
\begin{equation}
\label{def height}
\height(w) = 1 + \max \{ \height(\alpha), \height(\beta) \}.
\end{equation}
Notice that the height of an outer commutator in $r$ variables will always be at most $r-1$.

For the concepts of extended word, outer commutator extension of a tuple of normal subgroups, and extension by outer commutators, which appear in the statement of the following theorem, we refer the reader to Definitions 3.1, 3.3, and 3.4 of \cite{Fe-Pi}.

\begin{thm}
\label{linear series ocw}
Let $w$ be an outer commutator word in $r$ variables, of height $h$, and let $\tN=(N_1,\ldots,N_{r})$ be a tuple of normal subgroups of a group $G$.
Then there exists a series
\begin{equation}
\label{linear series w(N)}
[w(\tN),w(\tN)] = V_0\le V_1\le \cdots \le V_t = w(\tN)
\end{equation}
of normal subgroups of $G$ such that, for every fixed value $i=1,\ldots,t$, the following hold:
\begin{enumerate}
\item
The section $V_i/V_{i-1}$ is the image of an extension $v_i(\tM_i)$ of $w(\tN)$ by outer commutators.
\item
The extended word $v_i$ depends only on $w$, and not on the group $G$ or on the tuple $\tN$. Furthermore, $v_i$ has degree either $0$, if $h=0$, or at most $h-1$ otherwise.
\item
The procedure of construction of the outer commutator extension $\tM_i$ from the tuple $\tN$ depends only on $w$, and not on the group $G$ or on the tuple $\tN$.
\item
In the section $V_i/V_{i-1}$, the word $v_i$ is linear in one component of the tuple $\tM_i$.
\end{enumerate}
Furthermore, the length $t$ of the series is 1 if $h=0$ and at most $2^h+2^{h-1}-1$ if $h\geq 1$.
\end{thm}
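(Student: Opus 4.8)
The plan is to argue by induction on the height $h$ of the outer commutator word $w$. For the base case $h=0$ the word $w=x_1$ is a single variable, so $w(\tN)=N_1$ and the required chain is simply $[N_1,N_1]=V_0\le V_1=N_1$ of length $t=1$; its only section $N_1/[N_1,N_1]$ is the image of the extended word $v_1=x_1$ of degree $0$ evaluated on $N_1$, which is linear in its single component, and all four assertions and the length bound hold trivially. For the inductive step $h\ge 1$ I would write $w=[\alpha,\beta]$, where $\alpha,\beta$ are outer commutator words in disjoint sets of variables with heights $h_\alpha,h_\beta$ satisfying $1+\max\{h_\alpha,h_\beta\}=h$, and split $\tN$ accordingly into $\tN_\alpha$ and $\tN_\beta$. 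Setting $A=\alpha(\tN_\alpha)$ and $B=\beta(\tN_\beta)$, the first key step is the identity $w(\tN)=[A,B]$: both $A$ and $B$ are normal in $G$ because $\alpha$- and $\beta$-values are permuted by conjugation (the $N_i$ being normal), the inclusion $w(\tN)\le[A,B]$ is clear since $w$-values are commutators $[a,b]$ with $a\in A$, $b\in B$, and the reverse inclusion $[A,B]\le w(\tN)$ follows from the collection identities $[xy,z]=[x,z]^y[y,z]$ and $[x,yz]=[x,z][x,y]^z$ applied to the generating sets $\alpha\{\tN_\alpha\}$ and $\beta\{\tN_\beta\}$, together with the normality of $w(\tN)$ in $G$. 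Applying the induction hypothesis to $\alpha$ on $\tN_\alpha$ and to $\beta$ on $\tN_\beta$ then furnishes linear series $[A,A]=U_0\le\cdots\le U_p=A$ and $[B,B]=W_0\le\cdots\le W_q=B$, normal in $G$, enjoying the four properties, with $p\le\ell(h_\alpha)$ and $q\le\ell(h_\beta)$, where $\ell(0)=1$ and $\ell(k)=2^{k}+2^{k-1}-1$ for $k\ge 1$.

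Next I would construct the series for $[A,B]$ by interleaving the commutator subgroups $[U_i,B]$ and $[A,W_j]$, together with the bottom commutator subgroups $[[A,B],A]$ and $[[A,B],B]$, into a single chain starting at $V_0=[[A,B],[A,B]]$. The crucial structural fact exploited here is that $[A,B]\le A\cap B$, so that $[[A,B],[A,B]]$ lies below both $[[A,B],A]$ and $[[A,B],B]$; placing the latter two near the bottom of the chain is what allows one to absorb the mixed correction terms, such as $[u_1,b,u_2]\in[[A,B],A]$, produced by the collection identities above. In each section of the resulting chain I would substitute the inductive extended word $a_i$ (respectively $b_j$) into the corresponding slot of the bracket of $w$; since $a_i$ (respectively $b_j$) is linear in one component by induction and its evaluation describes the section $U_i/U_{i-1}$ (respectively $W_j/W_{j-1}$), the extended word $v_i$ so obtained is again linear in one component, and by the inductive degree bounds together with the recursion $h=1+\max\{h_\alpha,h_\beta\}$ its degree is at most $h-1$. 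Because every term of the chain and every $v_i$ is defined by a recipe that refers only to $w$ and to the inductive recipes for $\alpha$ and $\beta$, the whole construction is independent of $G$ and of $\tN$, which yields properties (i)--(iv); one verifies that each $v_i(\tM_i)$ is an extension of $w(\tN)$ by outer commutators in the sense of the definitions of \cite{Fe-Pi}.

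Finally, counting the terms contributed by the two inductive series and the connecting bottom steps gives a recursion of the shape $t(h)\le t(h_\alpha)+t(h_\beta)+1$ for $h\ge 2$, with $t(1)=2$ and $t(0)=1$; since $\ell(h)=2\ell(h-1)+1$ for $h\ge 2$ and $\ell(1)=2$, and $\ell$ is increasing, the worst case $h_\alpha=h_\beta=h-1$ yields $t\le 2\ell(h-1)+1=\ell(h)=2^h+2^{h-1}-1$, as claimed. The main obstacle is exactly the bookkeeping in the construction: one must order the subgroups $[U_i,B]$, $[A,W_j]$ and the bottom commutator terms so that every correction term arising from the commutator identities lands in a strictly earlier term of the chain, while at the same time keeping each section linear in one component, each extended word of degree at most $h-1$, and the overall length within the stated bound. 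Controlling the mixed terms living in $[[A,B],A]$ and $[[A,B],B]$ relative to $[[A,B],[A,B]]$ is the delicate point that the proposed interleaving is designed to resolve.
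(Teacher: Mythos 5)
Your overall skeleton (induction on height, splitting $w=[\alpha,\beta]$, inductive series for $A=\alpha(\tN^{\alpha})$ and $B=\beta(\tN^{\beta})$, gluing with corrections absorbed near the bottom, recursion $t\le t^{\alpha}+t^{\beta}+1$) matches the paper's, but the step you yourself call ``the delicate point'' is exactly where the proposal breaks down, and you do not resolve it. Granting you the best version of your interleaving: above the floor $D=[[A,B],A]\,[[A,B],B]\,V_0$ one can indeed order the products $[U_i,B][A,W_j]D$ along a monotone path in the $(i,j)$-grid, with section words $[v_i^{\alpha},\beta]$ and $[\alpha,v_j^{\beta}]$ and all corrections falling into $D$. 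What cannot be done is building the floor itself: you need at least two bridge sections from $V_0=[w(\tN),w(\tN)]$ up to $D$ (a single section is the image of a \emph{single} word, and $D$ needs two independent kinds of generators), and no admissible word exists for either. The candidates $[\alpha,y_1]$ and $[y_1,\beta]$ are not extended words of $w$ --- the paper itself stresses that $[\alpha(y_{s+1},\ldots,y_{s+q}),v_i^{\beta}]$ is an extended word of $\beta$, \emph{not} of $w$, so deleting one side is not allowed by Definition 3.1 of \cite{Fe-Pi}, and conditions (i)--(iii) fail --- while $[w,y_1]$ with $y_1\mapsto B$ fails (iv), since its correction terms lie in $[[[A,B],B],B]$, which is not contained in $[w(\tN),w(\tN)]$ (check in a free group). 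The paper's construction is engineered precisely to avoid ever installing $[[A,B],B]$ in the chain: the upper $t^{\alpha}$ sections are $[A_i,B]\,[A,[A,B]]$, glued in by P.~Hall's Three Subgroup Lemma via $[A,A,B]\le[A,B,A]$; the lower part is obtained by commutating the $\beta$-side series with $A$ \emph{twice}, giving terms $[A,\,[A,B_j]\,[[A,B],B]]\cdot V_0$ with words $[\alpha,[\alpha(\mathbf{y}),v_j^{\beta}]]$, so that the troublesome $[[A,B],B]$-corrections appear only wrapped as $[A,[[A,B],B]]$, which lies in the preceding term; and one extra degree-one section with word $[\alpha,[y_1,\beta]]$, $y_1\mapsto w(\tN)$, closes the gap down to $V_0$. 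None of these devices appears in your sketch, and ``order the subgroups so that every correction lands earlier'' is the very thing they exist to accomplish.

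There is a second, quantitative gap at the base of the induction. Your recursion with $t(0)=1$ gives $t(1)\le 3$ (and your actual construction, with its two bridge steps, would give more), yet you assert $t(1)=2$ with no construction. This is not cosmetic: the inductive step for $h=2$ already needs $t^{\alpha},t^{\beta}\le 2^{h-1}+2^{h-2}-1=2$, and with $t(1)=3$ the recursion only yields $t(h)\le 2^{h+1}-1$, missing the stated bound $2^h+2^{h-1}-1$ for every $h$. The paper handles $h=1$ separately via Theorem 2.9 of \cite{Fe-Pi}: the length-two series $[[N_1,N_2],[N_1,N_2]]\le[N_1,[N_1,N_2]]\le[N_1,N_2]$, in which both sections use the word $[x_1,x_2]$ itself and only \emph{one} of the two bottom commutators serves as floor --- possible because (iv) demands linearity in just one component. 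That asymmetry (one floor, not two) is what your symmetric treatment of $[[A,B],A]$ and $[[A,B],B]$ misses, both at $h=1$ and in the general step.
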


\begin{proof}
We prove the theorem by induction on $h$.
If $h=0$ then $w$ is a single variable, and the result is obvious.
On the other hand, if $h=1$ then we may assume that $w=[x_1,x_2]$, and the theorem holds by Theorem 2.9 of \cite{Fe-Pi}.
More precisely, we take the series
\[
V_0 = \big[[N_1,N_2],[N_1,N_2]\big] \le V_1 = \big[N_1,[N_1,N_2]\big] \le V_2= [N_1,N_2],
\]
and $v_1=v_2=w$, $\tM_1=(N_1,[N_1,N_2])$ and $\tM_2=(N_1,N_2)$.

Now let $h\ge 2$ and let us write $w=[\alpha,\beta]$, where $\alpha$ and $\beta$ are outer commutator words.
After renaming the variables involved, we may assume that
$\alpha=\alpha(x_1,\ldots,x_q)$ and $\beta=\beta(x_{q+1},\ldots,x_r)$.
By \eqref{def height}, we have $\height(\alpha),\height(\beta)\le h-1$ and the induction hypothesis applies to both
$\alpha$ and $\beta$.
Thus if we set $\tN^{\alpha}=(N_1,\ldots,N_{q})$ and
$\tN^{\beta}=(N_{q+1},\ldots,N_r)$ then there exist two series
\begin{equation}
\label{series(k-1)-1 ocw}
A_{0}=[\alpha(\tN^{\alpha}),\alpha(\tN^{\alpha})]
\le
\cdots
\le
A_{i}
\le
\cdots
\le
A_{t^{\alpha}} =\alpha(\tN^{\alpha})
\end{equation}
and
\begin{equation}
\label{series(k-1)-2 ocw}
B_{0}=[\beta(\tN^{\beta}),\beta(\tN^{\beta})]
\le
\cdots
\le
B_{i}
\le
\cdots
\le
B_{t^{\beta}} = \beta(\tN^{\beta})
\end{equation}
such that the corresponding factors $A_{i}/A_{i-1}$ and $B_{i}/B_{i-1}$ are the images of 
$v_i^{\alpha}(\tM^\alpha_{i})$ and $v_i^{\beta}(\tM^\beta_{i})$, respectively, where:
\begin{enumerate}
\item[(a)]
$v_i^{\alpha}$ and $v_i^{\beta}$ are extended words of $\alpha$ and $\beta$, respectively, each of degree at most $h-2$.
\item[(b)]
$\tM^\alpha_{i}$ and $\tM^\beta_{i}$ are outer commutator extensions of $\tN^{\alpha}$ and $\tN^{\beta}$, respectively.
\item[(c)]
The extended words $v_i^{\alpha}$ and $v_i^{\beta}$, and the procedure of construction of the outer commutator extensions $\tM_i^{\alpha}$
and $\tM_i^{\beta}$, depend only on $\alpha$ and $\beta$, respectively, and not on the group $G$ or on the tuples $\tN^{\alpha}$ and $\tN^{\beta}$.
\item[(d)]
In the sections $A_{i}/A_{i-1}$ and $B_{i}/B_{i-1}$, the words $v_i^{\alpha}$ and $v_i^{\beta}$ are linear in one component
of the tuples $\tM_i^{\alpha}$ and $\tM_i^{\beta}$, respectively.
\end{enumerate}
Also, since $h\geq 2$, we have $t^{\alpha},t^{\beta}\le 2^{h-1}+2^{h-2}-1$.

Let us now see how to construct the series for $w$ and for the tuple $\tN$ from the two series \eqref{series(k-1)-1 ocw} and \eqref{series(k-1)-2 ocw}.
As we will see, the length $t$ of this new series will be
\[
t = t^{\alpha}+t^{\beta}+1 \le 2^h+2^{h-1}-1,
\]
as desired.

We start by taking the commutator of all terms of the series \eqref{series(k-1)-1 ocw} with $\beta(\tN^{\beta})$.
This way we obtain the series
\begin{equation}
\label{series(k-1)-1-comm ocw}
[A_{0},\beta(\tN^{\beta})]
\le
\cdots
\le
[A_{i},\beta(\tN^{\beta})]
\le
\cdots
\le
[\alpha(\tN^{\alpha}),\beta(\tN^{\beta})]
=
w(\tN).
\end{equation}
By P.\ Hall's Three Subgroup Lemma, we have
\begin{equation*}
\begin{split}
[A_{0},\beta(\tN^{\beta})]
&=
[\alpha(\tN^{\alpha}),\alpha(\tN^{\alpha}),\beta(\tN^{\beta})]
\\
&\le
[\alpha(\tN^{\alpha}),\beta(\tN^{\beta}),\alpha(\tN^{\alpha})]
= 
[\alpha(\tN^{\alpha}),w(\tN)].
\end{split}
\end{equation*}
Now we multiply all terms of the series \eqref{series(k-1)-1-comm ocw} by
$[\alpha(\tN^{\alpha}),w(\tN)]$, and this is the rightmost part of the series we are seeking (where $t=t^{\alpha}+t^{\beta}+1$, as above):
\begin{equation}
\label{seriesk-1 ocw}
V_{t^{\beta}+1}
=
[\alpha(\tN^{\alpha}),w(\tN)]
\le
\cdots
\le
V_{t^{\beta}+i+1}
=
[A_{i},\beta(\tN^{\beta})] \, [\alpha(\tN^{\alpha}),w(\tN)]
\le
\cdots
\le
V_t = w(\tN).
\end{equation}
For every $i=1,\ldots,t^{\alpha}$, the factor $V_{t^{\beta}+i+1}/V_{t^{\beta}+i}$ in this series is the image of the subgroup
\[
[v_i^\alpha(\tM^\alpha_{i}),\beta(\tN^{\beta})],
\]
which can be represented in the form
$v_{t^{\beta}+i+1}(\tM_{t^{\beta}+i+1})$ by taking
\[
v_{t^{\beta}+i+1} = [v_i^{\alpha},\beta(x_{q+1},\ldots,x_r)]
\]
and defining $\tM_{t^{\beta}+i+1}$ to be the concatenation of $\tM_i^{\alpha}$ and $\tN^{\beta}$, where the elements of $\tN^{\beta}$ occupy the positions
corresponding to the variables $x_{q+1},\ldots,x_r$.
Notice that $v_{t^{\beta}+i+1}$ is an extended word of $w$ of the same degree as $v_i^{\alpha}$, so at most $h-2$, since
$\beta(x_{q+1},\ldots,x_r)$ does not involve any variables from the set $Y$.
Furthermore, $v_{t^{\beta}+i+1}$ only depends on $w$, and
$\tM_{t^{\beta}+i+1}$ is an outer commutator extension of $\tN$ whose construction only depends on $w$.
Of course, $v_{t^{\beta}+i+1}$ and $\tM_{t^{\beta}+i+1}$ will also depend on $i$, but we are working with a fixed value of $i$. 

In a symmetric way, by first taking the commutator of $\alpha(\tN^{\alpha})$ with all terms of the series \eqref{series(k-1)-2 ocw} and then multiplying by
$[w(\tN),\beta(\tN^{\beta})]$, we get the series
\begin{equation}
\label{series-k-2 aux ocw}
U_{t^{\alpha}+1}
=
[w(\tN),\beta(\tN^{\beta})]
\le
\cdots
\le
U_{t^{\alpha}+i+1}
=
[\alpha(\tN^{\alpha}),B_{i}] \, [w(\tN),\beta(\tN^{\beta})]
\le
\cdots
\le
U_t = w(\tN).
\end{equation}
Following the notation in Definition 3.1 of \cite{Fe-Pi}, suppose that the variables from the set $Y$ that have been used to construct
the extended words $v_i^{\beta}$ are contained in the set $\{y_1,\ldots,y_s\}$.
Then the factor $U_{t^\alpha+i+1}/U_{t^\alpha+i}$ of this last series can be generated by the image of the subgroup
$u_{t^{\alpha}+i+1}(\mathbf{L}_{t^{\alpha}+i+1})$, where
\[
u_{t^{\alpha}+i+1} = [\alpha(y_{s+1},\ldots,y_{s+q}),v_i^{\beta}]
\]
and $\mathbf{L}_{t^{\alpha}+i+1}$ is the concatenation of $\tM_i^{\beta}$ and
$\tN^{\alpha}$, where the elements of $\tN^{\alpha}$ occupy the positions
corresponding to the $q$ variables $y_{s+1},\ldots,y_{s+q}$.
Note that $u_{t^{\alpha}+i+1}$ is an extended word of $\beta(x_{q+1},\ldots,x_r)$ of degree at most $h-1$ that only depends on $w$.

Now we take the commutator of $\alpha(\tN^{\alpha})$ with the terms of the last series, and subtract $t^{\alpha}$ to all indices, getting
\begin{equation}
\label{seriesk-2 aux-2 ocw}
Z_1
=
\big[\alpha(\tN^{\alpha}),[w(\tN),\beta(\tN^{\beta})]\big]
\le
\cdots
\le
Z_i
=
[\alpha(\tN^{\alpha}),U_{t^{\alpha}+i}]
\le
\cdots
\le
Z_{t^{\beta}+1} = [\alpha(\tN^{\alpha}),w(\tN)].
\end{equation}
Finally, we define $V_0=[w(\tN),w(\tN)]$ and multiply all terms of \eqref{seriesk-2 aux-2 ocw} by this subgroup, setting
$V_i=Z_iV_0$ for $i=1,\ldots,t^{\beta}+1$.

Since $V_0\le [\alpha(\tN^{\alpha}),w(\tN)]$, we get the series
\begin{equation}
\label{seriesk-2 ocw}
V_0 = [w(\tN),w(\tN)]
\le
\cdots
\le
V_i
=
Z_i [w(\tN),w(\tN)]
\le
\cdots
\le
V_{t^{\beta}+1} = [\alpha(\tN^{\alpha}),w(\tN)].
\end{equation}
In this series, the factors $V_i/V_{i-1}$ for $i=2,\ldots,t^{\beta}+1$ are given by the images of the subgroups $v_i(\mathbf{M}_i)$, where
\[
v_i = [\alpha(x_1,\ldots,x_q),u_{t^{\alpha}+i}]
\]
is an extended word of $w$ of degree at most $h-1$, and $\tM_i$ is the concatenation of $\tN^{\alpha}$ and $\mathbf{L}_{t^{\alpha}+i}$, where
the elements of $\tN^{\alpha}$ occupy the positions corresponding to the variables $x_1,\ldots,x_q$.
On the other hand, the quotient $V_1/V_0$ is given by the image of $v_1(\tM_1)$, where
\[
v_1 = [\alpha(x_1,\ldots,x_{q}),[y_1,\beta(x_{q+1},\ldots,x_r)]]
\]
is an extended word of $w$ of degree $1$, and $\tM_1=(\tN,w(\tN))$, with $w(\tN)$ corresponding to $y_1$.

Now the concatenation of \eqref{seriesk-1 ocw} and \eqref{seriesk-2 ocw} is the desired series for $w$ and $\tN$.
The discussion of the previous paragraphs shows that $v_i(\tM_i)$ is an extension of $w(\tN)$ for every $i=1,\ldots,t$.
Thus we only need to check linearity of every word $v_i$ in one component of the vector $\tM_i$.
For $i=t^{\beta}+1,\ldots,t$, if $v_i^\alpha$ is linear in a component of $\tM^\alpha_{i}$ in the section $V_i^{\alpha}/V_{i-1}^{\alpha}$,
then $v_i$ is linear in the same component of $\tM_i$ in the section $V_i/V_{i-1}$, by Lemma 2.8 of \cite{Fe-Pi}.
For $i=1,\ldots,t^{\beta}$, we can use similarly the linearity in the section $V_i^{\beta}/V_{i-1}^{\beta}$. 
Finally for $i=1$, since $V_0 = [w(\tN),w(\tN)]$ we have linearity in the component corresponding to $y_1$, which takes values in $w(\tN)$.
\end{proof}

Note that Theorem 1.1 generalizes Theorem 3.5 of \cite{Fe-Pi}, where we provided a series like \eqref{linear series w(N)} for every derived word
$\delta_k$, of length exactly $2^k+2^{k-1}-1$.
Since $\delta_k$ is of height $k$, the bound for the length of the series given in Theorem 1.1 is attained for derived words.

\vspace{10pt}

We can now prove the corresponding version of Theorems 2.11 and 3.6 of \cite{Fe-Pi} for an arbitrary outer commutator word $w$.

\begin{thm}
\label{ocw concise most general}
Let $w$ be an outer commutator word in $r$ variables and let
$\tN=(N_1,\ldots,N_{r})$ be a tuple of normal subgroups of a group $G$.
Assume that $N_i=\langle S_i \rangle$ for every $i=1,\ldots,r$, where:
\begin{enumerate}
\item
$S_i$ is a normal subset of $G$.
\item
There exists $n_i\in\N$ such that all $n_i$th powers of elements of $N_i$ are contained in $S_i$.
\end{enumerate}
If for the tuple $\tS=(S_1,\ldots,S_{r})$ the set of values $w\{\tS\}$ is finite of order $m$, then the subgroup $w(\tN)$ is also finite and
of $(m,r,n_1,\ldots,n_{r})$-bounded order.
\end{thm}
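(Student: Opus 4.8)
The plan is to run the same strategy as for Theorems 2.11 and 3.6 of \cite{Fe-Pi}, now feeding in the linear series of \cref{linear series ocw} in place of the series available there only for the derived and lower central words $\delta_k,\gamma_k$. First I would record the consequences of hypothesis (i): since each $S_i$ is a normal subset of $G$, the value set $w\{\tS\}$ is $G$-invariant of size $m$, so every $x\in w\{\tS\}$ satisfies $x^G\subseteq w\{\tS\}$ and hence has at most $m$ conjugates; consequently $C=\bigcap_{x\in w\{\tS\}}C_G(x)$ has index at most $m^m$ in $G$. This finite-index centralizer is what will drive the final step. The heart of the proof is to bound $w(\tN)/[w(\tN),w(\tN)]$, and for this I would apply \cref{linear series ocw} to produce the series $[w(\tN),w(\tN)]=V_0\le\cdots\le V_t=w(\tN)$, whose length $t\le 2^h+2^{h-1}-1$ is $r$-bounded because $h\le r-1$. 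It then suffices to bound each factor $V_i/V_{i-1}$.

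Next I would bound a single factor using property (iv). By (i) the section $V_i/V_{i-1}$ is generated by the image of the value set $v_i\{\tM_i\}$, and it is abelian, being a section of $w(\tN)/V_0=w(\tN)/[w(\tN),w(\tN)]$. Suppose for the moment that this image is finite of $(m,r,n_1,\ldots,n_r)$-bounded order $N$. By (iv) the word $v_i$ is linear in one component of $\tM_i$, say the component taking values in a normal subgroup $M$; fixing all the remaining arguments, linearity makes $y\mapsto v_i(\ldots,y,\ldots)V_{i-1}$ a homomorphism $M\to V_i/V_{i-1}$, so its image is a subgroup of order at most $N$. Therefore each of the at most $N$ generators of $V_i/V_{i-1}$ lies in such a subgroup and so has order at most $N$. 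An abelian group generated by at most $N$ elements each of order at most $N$ has order at most $N^N$, whence $V_i/V_{i-1}$, and therefore $w(\tN)/[w(\tN),w(\tN)]$, is finite of bounded order.

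To finish I would pass from the bounded abelianization to all of $w(\tN)$ exactly as in \cite{Fe-Pi}: a finite-index subgroup of $G$ (built from $C$ together with the action on the now-finite section $w(\tN)/[w(\tN),w(\tN)]$) centralizes $w(\tN)$, so $Z(w(\tN))$ has $(m,r,n_1,\ldots,n_r)$-bounded index in $w(\tN)$. By Schur's theorem, in the quantitative form of B.\ H.\ Neumann and Wiegold, $[w(\tN),w(\tN)]$ is then finite of bounded order; combined with the bound on $w(\tN)/[w(\tN),w(\tN)]$ this yields that $w(\tN)$ is finite of $(m,r,n_1,\ldots,n_r)$-bounded order, as required.

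The main obstacle is precisely the assumption isolated in the second paragraph: that finiteness of $w\{\tS\}$ forces every extended value set $v_i\{\tM_i\}$ to be finite with explicit bounds, and likewise that the centralizer $C$ of $w\{\tS\}$ controls $w(\tN)$ rather than merely $w(\tS)$. This is the point at which both hypotheses are indispensable. The components of the outer commutator extension $\tM_i$ are evaluated on the full normal subgroups $N_i$, whereas only $w\{\tS\}$ is assumed finite; the bridge is that $v_i$ is an extension of $w$ by outer commutators, so its values are bounded products of commutators of $w$-values, and the multilinearity of outer commutator words modulo deeper terms lets one replace each $N_i$-argument by its $n_i$-th power, which lies in $S_i$ by hypothesis (ii), at the cost of a bounded and controllable error. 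Making this replacement uniform and quantitative, so that every $v_i\{\tM_i\}$ inherits a bound depending only on $m$, $r$ and the $n_i$, is the delicate step, and it is exactly the part of the argument that must be transported with care from the derived- and lower-central-word treatment of \cite{Fe-Pi}.
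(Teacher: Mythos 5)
Your overall architecture matches the paper's: the series of \cref{linear series ocw}, linearity to bound orders of generators of each section, and a Schur-type argument for the derived subgroup (which is essentially the content of Lemma 2.10 of \cite{Fe-Pi}, cited by the paper for the base step $V_0=[w(\tN),w(\tN)]$). But there is a genuine gap, and you point at it yourself: everything in your second paragraph is conditional on the image of $v_i\{\tM_i\}$ in $V_i/V_{i-1}$ being finite of $(m,r,n_1,\ldots,n_r)$-bounded order, and this is never established. The mechanism you sketch for it in the last paragraph --- that values of $v_i$ are bounded products of $w$-values and that ``multilinearity modulo deeper terms'' lets you replace each $N_i$-argument by its $n_i$-th power at the cost of a controllable error --- is not a viable route: outer commutator words are not multilinear, and \cref{linear series ocw}(iv) supplies linearity in exactly \emph{one} component of $\tM_i$, and only modulo $V_{i-1}$; no error-control statement is available for the remaining components.

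The paper's actual argument avoids bounding $v_i\{\tM_i\}$ altogether. Each component $M_j$ of $\tM_i$ equals $w_j(\tN_j)$ for an outer commutator word $w_j$ with entries from $\tN$; setting $T_j=w_j\{\tS_j\}$ (values on the generating sets), Lemma 2.1 of \cite{Fe-Pi} gives that $v_i(\tM_i)$ is generated by $v_i\{\tT\}$, while Lemmas 2.5 and 3.2 of \cite{Fe-Pi} give $T_j\subseteq S_j^{\ast f(r)}$ and $v_i\{\tT\}\subseteq w\{\tS\}^{\ast n}$ for an $r$-bounded number $n$, whence $|v_i\{\tT\}|\le (2m+1)^n$ is $(m,r)$-bounded. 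Hypothesis (ii) then enters exactly once, through the single linear component $j$: modulo $V_{i-1}$ one has $v_i(\mathbf{t})^{\lambda n_\ell}\equiv v_i(t_1,\ldots,t_j^{\lambda n_\ell},\ldots,t_s)$, and since $t_j^{\lambda}\in M_j\le N_\ell$, hypothesis (ii) puts $t_j^{\lambda n_\ell}\in S_\ell$, so all these powers lie in the bounded set $v_i\{\tT_j\}$ and the pigeonhole principle bounds the order of each generator $v_i(\mathbf{t})$ modulo $V_{i-1}$. This replacement of the full value sets by value sets on normal generating subsets is the missing quantitative core of the proof; note also that your closing step needs $C$ to centralize $w(\tN)$, i.e.\ $w(\tN)=\langle w\{\tS\}\rangle$, which is again Lemma 2.1 of \cite{Fe-Pi} --- the same tool your sketch lacks.
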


\begin{proof}
Let us consider the series
\[
[w(\tN),w(\tN)] = V_0\le V_1\le \cdots
\le V_t = w(\tN)
\]
of \cref{linear series ocw}.
We have proved that the length $t$ of this series is either $1$ or less than equal to $2^h+2^{h-1}-1$, where $h$ is the height of the outer commutator word $w$.
So in any case we have $t\le 2^r-1$, since $h\le r-1$.
We prove that every $V_i$ is finite of bounded order by induction on $i$.
The result for $i=0$ follows from Lemma 2.10 of \cite{Fe-Pi}.
Assume now that $i\ge 1$ and that the result is true for $V_{i-1}$.
By \cref{linear series ocw}, the section $V_i/V_{i-1}$ coincides with the image of a subgroup $v_i(\tM_i)$ that is an extension of $w(\tN)$ of degree at most $h$ in any case.

Let $\tM_i=(M_1,\ldots,M_s)$, which is an outer commutator extension of $\tN$.
Hence $s\ge r$ and for every $j=1,\ldots,s$ we have $M_j=w_j(\tN_j)$, where $w_j$ is an outer commutator word, all components in $\tN_j$ belong to $\tN$, and one of these components must be $N_j$ for $j=1,\ldots,r$.

Let $T_j=w_j\{\tS_j\}$, where $\tS_j$ is obtained from $\tN_j$ by replacing each subgroup $N_{\ell}$ with its given generating set $S_{\ell}$.
Hence $T_j$ is a normal subset of $G$ that generates $M_j$.
Also, for a given $j\in\{1,\ldots,r\}$, the subset $S_j$ appears as a component of $\tS_j$ and, since $\tS_j$ consists of normal subsets of $G$
by condition (i), it follows from Lemma 2.5 of \cite{Fe-Pi} that $T_j\subseteq S_j^{*q}$, where $q$ is the number
of variables used by $w_j$.
Now recall from \cref{linear series ocw} that the word $v_i$ (and hence also the number $s$ of variables of $v_i$) and the words $w_1,\ldots,w_s$ only depend on $w$, and not on $G$ or on $\tN$.
Since there are only finitely many outer commutator words in $r$ variables, there is a function
$f:\N\rightarrow\N$ such that $T_j\subseteq S_j^{\ast f(r)}$ for every $j=1,\ldots,r$.
If we set $\tT=(T_1,\ldots,T_s)$ then, by Lemma 3.2 of \cite{Fe-Pi}, we get $v_i\{\tT\}\subseteq w\{\tS\}^{\ast n}$, where
\[
n = f(r)^{r} 2^{h}\leq f(r)^{r} 2^{r-1}.
\]
Consequently
\begin{equation}
\label{bound v(T)}
|v_i\{\tT\}| \le (2m+1)^n,
\end{equation}
and $v_i\{\tT\}$ is finite of $(m,r)$-bounded cardinality.
On the other hand, it follows from Lemma 2.1 of \cite{Fe-Pi} that $v_i(\tM_i)$ can be generated by the set of values $v_i\{\tT\}$.

From \cref{linear series ocw}, we know that the word $v_i$ is linear in some position $j\in\{1,\ldots,s\}$ of the tuple $\tM_i$ modulo $V_{i-1}$.
Since $M_j=w_j(\tN_j)$ is as above, we have $M_j\le N_{\ell}$ for some $\ell\in\{1,\ldots,r\}$, and actually $\ell=j$
if $j\in\{1,\ldots,r\}$.
Now, from linearity, for every tuple $\mathbf{t}=(t_1,\ldots,t_s)\in \tT$ and every $\lambda\in\Z$, we have
\begin{equation}
\label{power out of v}
v_i(\mathbf{t})^{\lambda n_{\ell}}
=
v_i(t_1,\ldots,t_j,\ldots,t_s)^{\lambda n_{\ell}}
\equiv
v_i(t_1,\ldots,t_j^{\lambda n_{\ell}},\ldots,t_s)
\pmod{V_{i-1}}.
\end{equation}
We have $t_j^{\lambda}\in M_j\le N_{\ell}$ and then, by (ii) in the statement of the theorem, $t_j^{\lambda n_{\ell}}\in S_{\ell}$.
So we get
\[
v_i(t_1,\ldots,t_j^{\lambda n_{\ell}},\ldots,t_s) \in v_i\{\tT_j\},
\]
where $\tT_j$ is the tuple obtained from $\tT$ after replacing $T_j$ with $S_{\ell}$.
Similarly to \eqref{bound v(T)} and taking into account that $\ell=j$ if $j\in\{1,\ldots,r\}$, it follows that the set $v_i\{\tT_j\}$ is finite of $(m,r)$-bounded cardinality.
By \eqref{power out of v} there exist $(m,r)$-bounded integers $\lambda$ and $\mu$, with $\lambda\ne\mu$, such that
\[
v_i(\mathbf{t})^{\lambda n_{\ell}}
\equiv
v_i(\mathbf{t})^{\mu n_{\ell}}
\pmod{V_{i-1}}.
\]
This implies that $v_i(\mathbf{t})$ has finite order modulo $V_{i-1}$, bounded in terms of $m$, $r$ and $n_{\ell}$.

Summarizing, the abelian quotient $V_i/V_{i-1}$ is the image of the verbal subgroup $v_i(\tM_i)$, which is generated by the set $v_i\{\tT\}$
of $(m,r)$-bounded cardinality, and each element of $v_i\{\tT\}$ has $(m,r,n_{\ell})$-bounded order.
We conclude that the order of $V_i/V_{i-1}$ is $(m,r,n_{\ell})$-bounded, which completes the proof.
\end{proof}

We can now obtain Theorems A and B as special cases of this last result.

\begin{cor}
\label{ocw of non-comm concise}
Let $w$ be an outer commutator word in $r$ variables and let $u_1,\ldots,u_{r}$ be non-commutator words.
Then the word $w(u_1,\ldots,u_{r})$ is boundedly concise.
In particular, $w(x_1^{n_1},\ldots,x_{r}^{n_{r}})$ is boundedly concise for all $n_i\in \Z\smallsetminus\{0\}$.
\end{cor}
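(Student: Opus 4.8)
The plan is to derive the corollary from Theorem~\ref{ocw concise most general} by choosing the tuple of normal subgroups and generating sets appropriately. Let $v=w(u_1,\ldots,u_r)$ and suppose $v\{G\}$ is finite of order $m$ in some group $G$; I must show $v(G)$ is finite of $(m,r)$-bounded order. Since each $u_j$ is a non-commutator word, its exponent sum in some variable is a non-zero integer $n_j$, and it is a standard fact that the verbal subgroup $u_j(G)$ is generated by the normal set $S_j=u_j\{G\}$ together with all $n_j$th powers of its elements; more precisely $S_j$ is a normal subset of $G$ and all $n_j$th powers of elements of $N_j:=u_j(G)$ lie in $S_j$. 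This is exactly the configuration demanded by conditions~(i) and~(ii) of Theorem~\ref{ocw concise most general}.

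The key observation is that the value set of $v$ on $G$ equals the value set of $w$ on the tuple $\tS=(S_1,\ldots,S_r)$, that is $v\{G\}=w\{\tS\}$: a value $w(u_1(\bar g_1),\ldots,u_r(\bar g_r))$ of $v$ is precisely a value of $w$ on elements $u_j(\bar g_j)\in S_j$, and conversely every element of $S_j$ is such a $u_j$-value because the $u_j$ use disjoint variables. Hence $w\{\tS\}$ is finite of order $m$. Now Theorem~\ref{ocw concise most general} applies directly with $\tN=(N_1,\ldots,N_r)$ and these $S_i$, and yields that $w(\tN)$ is finite of $(m,r,n_1,\ldots,n_r)$-bounded order. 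It remains only to identify $w(\tN)$ with $v(G)$: since $N_i=u_i(G)$ and the $u_i$ involve disjoint sets of variables, the verbal subgroup $v(G)=w(u_1,\ldots,u_r)(G)$ is generated exactly by the values of $w$ on the $u_i(G)=N_i$, so $v(G)=w(\tN)$.

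The final statement about $w(x_1^{n_1},\ldots,x_r^{n_r})$ is the special case $u_i=x_i^{n_i}$, whose exponent sum is $n_i\neq 0$, so each $u_i$ is a non-commutator word and the general conclusion applies. The main point to handle carefully is the verification that $v\{G\}=w\{\tS\}$ and $v(G)=w(\tN)$ hold as \emph{equalities} of sets and subgroups respectively; this relies essentially on the $u_i$ being words in \emph{disjoint} variables, so that a tuple of values of the $u_i$ can be chosen independently of one another. I expect no genuine obstacle here, since the only nontrivial input, the reduction of conciseness to the normal-subgroup setting, is already packaged in Theorem~\ref{ocw concise most general}; the boundedness of the order in terms of $m$ and $r$ alone (rather than also the $n_i$) follows because the $n_i$ are intrinsic data of the fixed words $u_i$ and are absorbed into the word $w(u_1,\ldots,u_r)$.
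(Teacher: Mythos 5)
Your proposal is correct and is essentially the paper's intended argument: the paper derives this corollary as a direct specialization of Theorem~\ref{ocw concise most general}, taking $N_i=u_i(G)$ and $S_i=u_i\{G\}$, using that a non-commutator word has non-zero exponent sum $n_i$ in some variable (so all $|n_i|$th powers are $u_i$-values) and that disjointness of variables gives $w(u_1,\ldots,u_r)\{G\}=w\{\tS\}$. One small remark: you do not actually need the equality $v(G)=w(\tN)$ (which itself rests on Lemma 2.1 of \cite{Fe-Pi}); the trivial inclusion $v(G)=w(\tS)\le w(\tN)$ already suffices, since finiteness of $w(\tN)$ of bounded order passes to the subgroup $v(G)$.
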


\begin{cor}
\label{ocw concise on normal}
Let $w$ be an outer commutator word in $r$ variables and let $\tN=(N_1,\ldots,N_{r})$ be a tuple of normal subgroups of a group $G$.
If $w\{\tN\}$ is finite of order $m$, then the subgroup $w(\tN)$ is also finite, of $(m,r)$-bounded order.
\end{cor}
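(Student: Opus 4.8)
The plan is to obtain \cref{ocw concise on normal} as the special case of \cref{ocw concise most general} in which each generating set $S_i$ is taken to be the full normal subgroup $N_i$ itself. The point of \cref{ocw concise most general} is that it is stated for arbitrary normal generating subsets $S_i$ subject to the two conditions (i) and (ii); choosing $S_i=N_i$ makes both conditions trivially satisfied, and the resulting bound collapses to the one claimed here.

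First I would set $S_i=N_i$ for every $i=1,\ldots,r$. Then $N_i=\langle S_i\rangle$ holds automatically, and condition (i) of \cref{ocw concise most general} holds because each $N_i$, being a normal subgroup of $G$, is in particular a normal subset of $G$. For condition (ii), note that every element of $N_i$ is its own first power, so all first powers of elements of $N_i$ lie in $N_i=S_i$; hence we may take $n_i=1$ for each $i$.

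With this choice the tuple $\tS=(S_1,\ldots,S_r)$ coincides with $\tN$, so that $w\{\tS\}=w\{\tN\}$, which is finite of order $m$ by hypothesis. Applying \cref{ocw concise most general} then gives that $w(\tN)$ is finite of $(m,r,n_1,\ldots,n_r)$-bounded order. Since every $n_i$ equals $1$, the dependence on the $n_i$ disappears and the order of $w(\tN)$ is $(m,r)$-bounded, exactly as required.

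I do not expect any genuine obstacle at this stage: all the substantive work has already been carried out in \cref{linear series ocw} and \cref{ocw concise most general}. The only thing to verify is that specializing $S_i$ to $N_i$ (with $n_i=1$) legitimately meets the hypotheses, which is immediate. The sole conceptual observation is that the flexible formulation of \cref{ocw concise most general}, allowing arbitrary normal generating subsets together with a power condition, is precisely what lets a single statement yield both the present corollary (via $S_i=N_i$) and the conciseness result of \cref{ocw of non-comm concise}; here we are simply taking the most economical instance.
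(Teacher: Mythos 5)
Your proposal is correct and matches the paper's intent exactly: the paper derives this corollary as an immediate special case of \cref{ocw concise most general}, namely by taking $S_i=N_i$ and $n_i=1$ for every $i$, so that the $(m,r,n_1,\ldots,n_r)$-bound collapses to an $(m,r)$-bound. There is nothing further to verify.
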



\vskip 0.4 true cm

\begin{center}{\textbf{Acknowledgments}}
\end{center}
Supported by the Spanish Government, grant PID2020-117281GB-I00, partly with FEDER funds, and by the Basque Government, grant IT483-22.
The second author was also supported by a grant FPI-2018 of the Spanish Government.

The authors thank the anonymous referee for helpful comments. 

\vskip 0.4 true cm



\bigskip
\bigskip

{\footnotesize \pn{\bf Gustavo A.\ Fern\'andez-Alcober}\;
\\
{Department of Mathematics}, {University of the Basque Country UPV/EHU,} {Bilbao, Spain}\\
{\tt Email: gustavo.fernandez@ehu.eus}}\\

{\footnotesize \pn{\bf Matteo Pintonello}\;
\\
{Department of Mathematics}, {University of the Basque Country UPV/EHU,} {Bilbao, Spain}\\
{\tt Email: matteo.pintonello@ehu.eus}}\\

\end{document}